\newtheorem{thm}{Theorem}[section]
\newtheorem{lemma}[thm]{Lemma}
\newtheorem{defi}[thm]{Definition}
\newtheorem{cor}[thm]{Corollary}
\title{Characterization of $C^{(n)}$}
\author{Meritxell S\'aez}
\date{}
\begin{document}
\bibstyle{plain}

\maketitle

\begin{abstract}
In this paper a new geometric characterization of the $n$th symmetric product of a curve is given. Specifically, we assume that there exists a chain of smooth subvarieties $V_i$ of dimension $i$, such that $V_i$ is an ample divisor in $V_{i+1}$ and its intersection product with $V_1$ is one. That the Albanese dimension of $V_2$ is $2$ and the genus of $V_1$ is equal to the irregularity of the variety. We prove that in this case the variety is isomorphic to the symmetric product of a curve.

{\parskip7pt
\noindent \textbf{Keywords}: Symmetric product, curve, irregular variety.}
\end{abstract}

\let\thefootnote\relax\footnote{The author has been partially supported by the Proyecto de Investigaci\'on MTM2012-38122-C03-02.}
\let\thefootnote\relax\footnote{2010MSC Codes: 14J30, 14J35, 14J40}
\let\thefootnote\relax\footnote{University of Copenhagen. Contact e-mail: meritxell@math.ku.dk. Phone: +4535337598}

\section{Introduction}

The aim of this paper is to give a new characterization of the $n$th symmetric product of a curve. Following the ideas introduced in the articles \cite{CCM} and \cite{MPP1} we prove a characterization of the $n$th symmetric product of a curve by the existence of a chain of subvarieties with certain properties. This generalizes the $2$-dimensional case proved in the mentioned references. 

Let $C$ be a smooth complex projective curve of genus $g$. For an integer $n\geq 1$, the $n$th symmetric product of $C$ is the quotient of the Cartesian product by the action of the $n$th symmetric group. The action of $S_n$ on $C\times \cdots \times C$ is by permutation of the factors. It is well known that $C^{(n)}$ is a smooth and projective variety of dimension $n$ which parametrizes the effective degree $n$ divisors on $C$. Equivalently, it parametrizes the unordered $n$-tuples of points of $C$. 

Symmetric products of curves play a very important role both in the theory of algebraic curves and in the theory of higher dimensional algebraic varieties. In the first topic, they are exploited by Brill-Noether theory to study special divisors on curves. Moreover, the $i$th symmetric product determines the cuve $C$. In the second topic, they are particularly simple examples of irregular varieties in any dimension.

The square symmetric product of a curve can be described in a very precise geometric way, simply by the existence of a divisor with certain numerical properties. 

\begin{thm}(\cite{MPP1}) \label{mpp}
Let $S$ be a smooth surface of general type with irregularity $q$ containing a $1$-connected divisor $D$ such that $p_a(D)=q$ and $D^2>0$. Then the minimal model of $S$ is either
\begin{enumerate}
\item the product of two curves of genus $g_1,\,g_2\geq 2$ ($g_1+g_2=q$) or
\item the symmetric product $C^{(2)}$, where $C$ is a smooth curve of genus $q$, and $C^2=1$.
\end{enumerate}
Furthermore, if $D$ is $2$-connected, only the second case occurs.
\end{thm}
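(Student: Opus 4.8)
The plan is to study the Albanese map $a\colon S\to A:=\operatorname{Alb}(S)$, which has $\dim A=q$, and to use $D$ to pin down its image. First one records the numerical consequences of the hypotheses: since $D$ is $1$-connected, $h^0(\mathcal O_D)=1$, hence $h^1(\mathcal O_D)=p_a(D)=q$; and from $p_a(D)=1+\tfrac12(D^2+K_S\cdot D)$ together with $K_S\cdot D\ge 0$ (passing to the minimal model, where $K$ is nef) one gets $D^2+K_S\cdot D\ge D^2>0$, so $q\ge 2$. The key reduction is to prove that the restriction map $H^1(S,\mathcal O_S)\to H^1(D,\mathcal O_D)$ is an isomorphism; since both spaces have dimension $q$ this amounts to the vanishing $H^1(S,\mathcal O_S(-D))=0$, which I would deduce from the $1$-connectedness of $D$ and $D^2>0$ by a Ramanujam--Mumford type argument on the minimal model. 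The consequence is that, up to translation, $a|_D$ is the Abel--Jacobi map of $D$ and $\operatorname{Pic}^0(S)\to\operatorname{Pic}^0(D)$ is an isomorphism of abelian varieties; in particular, when $D$ is smooth and irreducible, $A$ is a Jacobian, $A\cong JD$ with its principal polarization.

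\textbf{Step 1 (Albanese dimension).} I would first rule out $\dim a(S)=1$. In that case $a$ factors through a fibration $f\colon S\to B$ onto a smooth curve, and the reduction above forces $g(B)=q$ and $A=JB$. As $D^2>0$, $D$ is not contained in fibres, so it is a multisection of some degree $e=D\cdot F$ ($F$ a general fibre); Riemann--Hurwitz for $D\to B$ together with $p_a(D)=q=g(B)\ge 2$ gives $(e-1)(g(B)-1)\le 0$, hence $e=1$, i.e.\ $D$ is birational to a section. A standard analysis of a fibred surface of general type with a section, using $K_S\cdot F=2g(F)-2>0$ and the relative canonical bundle formula, then shows that $S$ is birational to $B\times F$, which is conclusion~(1) of the theorem; and in this case $D$ decomposes, on the minimal model, as a section plus a fibre, with intersection number $1$, so $D$ is not $2$-connected.

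\textbf{Step 2 (the generically finite case).} Now assume $a$ is generically finite onto a surface $a(S)\subset A$. Write $D=\sum D_i$ into irreducible components; each $a(D_i)$ is a point or a curve generating an abelian subvariety $A_i$. Using $p_a(D)=q=\dim A$ and the formula $p_a(D_1+D_2)=p_a(D_1)+p_a(D_2)+D_1\cdot D_2-1$, together with $\dim A_i\le p_a(D_i)$ for components dominating $A_i$, the fact that the $A_i$ generate $A$, and the fact that a general type surface admits no fibration onto a curve of genus $\le 1$, a genus count shows that the dual graph of $D$ is a tree with weight-one edges, every component is smooth with $JD_i\cong A_i$, and $A=\prod A_i$; then $D^2>0$ (and the absence of components contracted by $a$, also forced by $D^2>0$) leaves only two configurations. \emph{Either} $D=C$ is a single smooth curve of genus $q$ with $C^2>0$ and $A\cong JC$; \emph{or} $D=D_1+D_2$ with $D_1\cdot D_2=1$ and $A=JB_1\times JB_2$, $g(B_i)=\dim A_i\ge 2$; in the second case the two projections give fibrations $f_i\colon S\to B_i$ and $(f_1,f_2)\colon S\to B_1\times B_2$ is generically finite of degree one (one checks the degree using $D^2$ and $p_a(D)$), so $S$ is birational to $B_1\times B_2$, conclusion~(1), with $D$ again not $2$-connected. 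Since the Jacobian of a smooth curve is indecomposable while $JB_1\times JB_2$ is decomposable, the two conclusions are genuinely exclusive.

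\textbf{Step 3 (the symmetric product, and the obstacle).} It remains to treat the irreducible case $D=C$: a smooth genus-$q$ curve with $C^2>0$, $A\cong JC$, and $a|_C$ a translate of the Abel--Jacobi embedding, so $a(C)=W_1$. The goal is to identify $a(S)$ with a translate of the Abel--Jacobi surface $W_2\subset JC$ and $S$ with $C^{(2)}$. The heart of the proof is to show that the translates of $W_1$ contained in $a(S)$ sweep it out and are themselves parametrized by a copy of $W_1$, so that $a(S)=W_1+W_1$ is a translate of $W_2$ --- equivalently, to compute the cohomology class $[a(S)]$ and recognise it as the minimal class $\theta^{q-2}/(q-2)!$, invoking the characterization of $W_2$ as the only surface of minimal class in a Jacobian. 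This forces $\deg a=1$, and since $S$ and $C^{(2)}$ are both minimal of general type (so necessarily $q\ge 3$, the range in which $C^{(2)}$ is of general type --- the small-$q$ possibilities being vacuous), $S\cong C^{(2)}$. Finally $C^2=1$ drops out by pulling $C$ back along the double cover $\pi\colon C\times C\to C^{(2)}\cong S$: one has $\pi^*C=(\{pt\}\times C)+(C\times\{pt\})$, whence $2\,C^2=(F_1+F_2)^2=2$. The $2$-connected addendum is then immediate, since landing in conclusion~(1) always produced a decomposition of $D$ into two effective pieces meeting in a single point. \textbf{The main obstacle} is precisely the identification of $a(S)$ with a translate of $W_2$, equivalently extracting $C^2=1$ from $C^2>0$: the inequalities in Step~2 only yield ``$\le$'', and forcing equality requires using $D^2>0$ and the equality $p_a(D)=q$ in tandem, through the incidence geometry of the deformations of $C$ inside $S$ (or the minimal-class characterization). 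This is also the step the present paper must re-engineer by an induction along the chain $V_1\subset V_2\subset\cdots\subset V_n$.
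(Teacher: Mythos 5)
First, a point of reference: the paper does not prove Theorem \ref{mpp} at all --- it is quoted verbatim from \cite{MPP1} and used as the base case of the induction, so there is no in-paper proof to compare against. The only information the paper gives about the argument is that \cite{MPP1} relies on the characterization from \cite{CCM} ($C^{(2)}$ is the only minimal surface of irregularity $q$ covered by curves of genus $q$ and self-intersection $1$) and that, along the way, it produces the one-dimensional family $\mathcal{W}=\{\tilde{\eta}\in Pic^0(S)\ |\ h^0(S,\mathcal{O}_S(C)\otimes\tilde{\eta})=1\}$ of algebraic deformations of the divisor. That family is obtained by twisting $0\to\mathcal{O}_S\to\mathcal{O}_S(D)\to\mathcal{O}_D(D)\to 0$ by elements of $Pic^0(S)$ lying over a translate of the Abel--Jacobi image $W_1$ of $D$ and killing $H^1$ by generic vanishing --- exactly the mechanism the present paper re-runs in dimension $n$. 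So the intended endgame is a covering family of genus-$q$ curves of self-intersection $1$ followed by \cite{CCM}, not the minimal-class characterization of $W_2$ inside the Jacobian.

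Measured against that, your proposal is not yet a proof: you yourself flag that Step 3 --- identifying $a(S)$ with a translate of $W_2$, forcing $\deg a=1$, and extracting $C^2=1$ from $C^2>0$ --- is ``the main obstacle,'' and you do not supply the argument. Since that is the entire content of conclusion (2), the hard half of the theorem is missing; the route that actually closes it is the deformation-family argument above, which your sketch never sets up. There is also a concrete error in Step 1: a product $B\times F$ with $g(B),g(F)\geq 2$ has Albanese dimension $2$ (its Albanese map is the product of the two Abel--Jacobi maps), so the case $\dim a(S)=1$ cannot ``land in conclusion (1)''; it must be excluded outright, for instance because a section or multisection $D$ of a fibration over a genus-$q$ base with $p_a(D)=q$ has $D^2\leq 0$ by adjunction together with the positivity of $\omega_{S/B}$, contradicting $D^2>0$. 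Moreover, the assertion that a fibred surface of general type with a section is birational to $B\times F$ is false as stated, and if it held here it would contradict your own normalization $q=g(B)$, since a product has $q=g(B)+g(F)$. Steps 0 and 2 (the vanishing $H^1(S,\mathcal{O}_S(-D))=0$ via Ramanujam-type results for $1$-connected divisors with $D^2>0$, and the genus and connectedness bookkeeping reducing to $D$ irreducible or $D=D_1+D_2$ with $D_1\cdot D_2=1$) are plausible and broadly consistent with \cite{MPP1}, but they only set the stage.
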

 
We remark that in the proof of this theorem the authors use the characterization of $C^{(2)}$ given in \cite{CCM}. Namely, that $C^{(2)}$ is the only minimal algebraic surface with irregularity $q$ that is covered by curves of genus $q$ and self-intersection $1$. These are the coordinate curves $C_{2,P}$, $P\in C$, that parametrize the degree two divisors in $C$ which contain the point $P$. 

In general, given a point $P\in C$, we define the divisor $C_{n,P}$ of $C^{(n)}$ as 
\[
C_{n,P}=\left\{P+\mathcal{Q}\ |\ \mathcal{Q}\in C^{(n-1)}\right\}.
\]
That is, $C_{n,P}$ is the image of the inclusion map $i_P: C^{(n-1)}\rightarrow C^{(n)}$ with $i_P(\mathcal{Q})=P+\mathcal{Q}$. The divisor $C_{n,P}$ is ample in $C^{(n)}$ (see \cite[Pag. 247]{Poli}) and isomorphic to $C^{(n-1)}$.

The numerical equivalence class of $C_{n,P}$ is independent of $P$, and so, when talking about numerical classes, the subindex $P$ will not be significant. We will call these divisors the \textbf{coordinate divisors}. When $n=2$, they are the usual \textbf{coordinate curves} in $C^{(2)}$. These coordinate divisors form a $1$-dimensional family, $\mathcal{C}$, of algebraically equivalent divisors in $C^{(n)}$ (not linearly equivalent). Moreover, its numerical class determines the family. That is, if an effective divisor of $C^{(n)}$ is numerically equivalent to $C_{n,P}$ then it belongs to the family $\mathcal{C}$ (see \cite{CS}).

The main result in this paper is the following theorem characterizing symmetric products of curves:

\begin{thm}\label{charCn}
Let $X$ be a smooth projective variety of dimension $n$. Assume that there exists a chain of inclusions 
\[
X=V_n\supset V_{n-1} \supset \cdots \supset V_2 \supset V_1=C
\] 
such that 
\begin{enumerate}
\item $V_i$ is a smooth irreducible variety with $dim(V_i)=i$.
\item For $i<n$, $V_i$ is an ample divisor in $V_{i+1}$.
\item $V_i\cdot C=1$ inside $V_{i+1}$.
\item The Albanese dimension of $V_2$ is $2$.
\item $q(X)=g(C)$. 
\end{enumerate}

Then $X\cong C^{(n)}$. Moreover, $V_i\cong C^{(i)}$ and it is a coordinate divisor inside $V_{i+1}$ for $i<n$.
\end{thm}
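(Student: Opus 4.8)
The plan is to run an induction on $n$, using the $2$-dimensional case (Theorem~\ref{mpp}, in its $2$-connected incarnation from \cite{CCM}) as the base step, and at each stage to produce the curve $C$ together with a birational model of $V_{i+1}$ as $C^{(i+1)}$ with $V_i$ playing the role of a coordinate divisor. First I would set up the base case: $V_2$ is a smooth projective surface containing the smooth curve $V_1 = C$ with $C$ ample, $C^2 = 1$ inside $V_2$, and Albanese dimension $2$; I need to check that hypothesis (5) at the top level, together with the chain structure, forces $q(V_2) = g(C)$ as well (by a Lefschetz-type argument: ampleness of each $V_i$ in $V_{i+1}$ makes the restriction maps $H^1(V_{i+1},\mathcal O) \to H^1(V_i,\mathcal O)$ injective, and Albanese dimension $2$ plus $C^2=1$ pins the irregularities down). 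Then $C$ has self-intersection $1$ and genus $q(V_2)$, and it moves (an ample divisor with $C^2 = 1$ on a surface of maximal Albanese dimension sweeps out $V_2$), so $C$ is a coordinate curve in the sense of \cite{CCM} and $V_2 \cong C^{(2)}$, with $C$ a coordinate curve.

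For the inductive step, assume $V_i \cong C^{(i)}$ with $V_{i-1}$ a coordinate divisor $C_{i,P}$ inside it; I want to conclude the same one level up, i.e.\ $V_{i+1} \cong C^{(i+1)}$ with $V_i$ a coordinate divisor. The key structural input is the remark quoted in the introduction: on $C^{(i)}$ the coordinate divisors form the $1$-dimensional algebraic family $\mathcal C$, determined by their numerical class, and every effective divisor numerically equivalent to $C_{i,P}$ lies in $\mathcal C$. So inside $V_i = C^{(i)}$ the divisor $V_{i-1}$ moves in a $1$-parameter family sweeping out $V_i$; the plan is to spread this family out to $V_{i+1}$. Concretely, I would argue that the normal bundle and adjunction data of the chain, plus the hypothesis $V_i \cdot C = 1$, force $V_i$ to move in a pencil (or algebraic family) $\{V_i^{(t)}\}$ inside $V_{i+1}$ whose members are all isomorphic to $C^{(i)}$ and whose "coordinate point" traces out $C$; the incidence variety of this family maps to $C$ with fibres $C^{(i)}$, and one identifies the total space with the universal divisor over $C^{(i+1)}$, giving $V_{i+1} \cong C^{(i+1)}$ with $V_i = C_{i+1,P}$. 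Hypothesis (5), $q(X) = g(C)$, is what prevents the "product of curves" alternative from Theorem~\ref{mpp} from propagating: it forces the Albanese image to be the right dimension at the top, hence (via the ample restriction maps again) at every level, so that the family one constructs is genuinely the coordinate family and not a product fibration.

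I expect the main obstacle to be the inductive step's geometry, specifically showing that $V_i$ actually moves in the correct algebraic family inside $V_{i+1}$ and that the total space of this family reconstructs $C^{(i+1)}$ rather than merely some variety fibred in $C^{(i)}$'s over $C$. The numerical hypotheses (2) and (3) give the right intersection-theoretic shadow of a coordinate divisor, and the results of \cite{CS} quoted above convert "numerically a coordinate divisor" into "actually in the family $\mathcal C$" once one is inside a known $C^{(i)}$; but promoting this from $V_i$ to $V_{i+1}$ requires a deformation/semicontinuity argument (showing $h^0(V_{i+1}, \mathcal O(V_i))$ is large enough) together with an analysis of the resulting map to the Albanese of $V_{i+1}$ to see that the base of the family is exactly $C = V_1$ and the universal property of $C^{(i+1)}$ as a parameter space of divisors on $C$ is matched. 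A secondary technical point is the bookkeeping on irregularities and Albanese dimensions along the chain — ensuring that $q(V_j) = g(C)$ and that $V_j$ has maximal Albanese dimension for every $j$ — which I would handle once at the start via the Lefschetz hyperplane theorem for the ample divisors $V_j \subset V_{j+1}$ and the behaviour of Albanese maps under restriction to ample divisors.
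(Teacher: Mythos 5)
Your overall architecture (induction on dimension, base case from Theorem~\ref{mpp}, and the need to move $V_i$ in a one-dimensional family inside $V_{i+1}$ so as to reconstruct the symmetric product) matches the paper's, and you correctly flag the construction of that family as the main obstacle. But that obstacle is exactly where your proposal has a genuine gap, and the substitute you suggest would fail. You propose to get the family by ``a deformation/semicontinuity argument (showing $h^0(V_{i+1},\mathcal O(V_i))$ is large enough)''. It is not large: the coordinate divisors of a symmetric product are algebraically but \emph{not} linearly equivalent (as the paper notes explicitly), so for the target geometry one has $h^0(\mathcal O(V_i))=1$ and there is no pencil. The family one needs consists of divisors in the line bundles $\mathcal O(V_{n-1})\otimes\alpha\otimes\eta$ as $\eta$ runs over a translated copy of the Brill--Noether curve $W_1(C)\subset Pic^0$, and producing sections of each of these twists is the real work. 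The paper does this in two steps you do not supply: (a) it constructs the correcting element $\alpha\in Pic^0(X)$ by restricting to $S=V_2$ and using the Hodge index theorem (plus torsion-freeness of $H^2(C^{(2)},\mathbb Z)$ and Lefschetz for Picard groups) to show $V_{n-2}-N$ is algebraically trivial on $V_{n-1}$, which also yields $V_{n-1}^n=1$; and (b) it applies generic vanishing (Green--Lazarsfeld--Simpson) to conclude $H^1(X,\alpha\otimes\eta)=0$ for general $\eta$ --- the point being that $W_1(C)$ generates $Pic^0$, so its translate cannot sit inside $V^1(X)$ --- which forces the section of $\mathcal O_{V_{n-1}}(V_{n-2})\otimes\eta$ to lift to $X$; semicontinuity then handles all $\eta$. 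Without some version of this cohomological input, ``$V_i$ moves in the correct algebraic family'' is an assertion, not an argument.

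Two further points are missing or underdeveloped. First, your passage from the family to $V_{i+1}\cong C^{(i+1)}$ via ``identifying the total space with the universal divisor'' is not justified; the paper instead computes the index of the family to be $n$ (using that the family restricts on a general member to the coordinate family of $C^{(n-1)}$, of index $n-1$) and defines the map $Q\mapsto P_1+\cdots+P_n$ directly from the $n$ members through a general point, then upgrades birational to biregular using ampleness of $V_{n-1}$ (any contracted curve would meet $V_{n-1}$ trivially). Note also that the paper's induction hypothesis is applied to the \emph{new} divisors $H_\eta$ of the family (after checking the general one is smooth, which requires a separate analysis of base points and deforming singularities), not bottom-up along the given chain as you propose; your bottom-up version would additionally need to verify all five hypotheses for the truncated chain at each stage. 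Second, your base-case claim that ``an ample divisor with $C^2=1$ on a surface of maximal Albanese dimension sweeps out $V_2$'' is essentially the content of the theorems of \cite{CCM} and \cite{MPP1} being quoted, and needs the genus condition $p_a(C)=q$; it is not a freestanding observation.
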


We prove this result by induction on the dimension of the variety. The $2$-dimensional step is a consequence of Theorem \ref{mpp}. To prove the induction step, we observe first that the $Pic^0$ varieties of the elements in the chain are isomorphic. Using these isomorphisms and generic vanishing results, we find a one dimensional algebraic family of dicisors which are birational to the $C^{(n-1)}$. This family allows us to construct a birational map between our variety and $C^{(n)}$. The image of this family by the morphism is the family $\mathcal{C}$ of the coordinate divisors. Finally, we deduce that the map is an isomorphism.  

\noindent \textbf{Acknowledgments} The author thanks Miguel Angel Barja and Joan Carles Naranjo for the multiple discussions and invaluable help on the development of this article. Many thanks also to the Universitat de Barcelona for the pre-doctoral grant APIF and their hospitality afterwards.

\textbf{Notation:} We work over the complex numbers. All varieties considered are projective and irreducible. For a smooth variety $X$ we denote by $q(X)=h^0(X, \Omega^1_X)$ its irregularity. The Albanese dimension of a variety is the dimension of its image by the Albanese morphism.

\section{Proof of the main theorem}

First, we remind some results that are useful for the proof of Theorem \ref{charCn}.

\begin{lemma}\label{injrest}
Let $X$ be an algebraic variety of dimension $n\geq 3$ and let $D$ be an ample effective reduced divisor. Then, the restriction map $Pic^0(X)\rightarrow Pic^0(D)$ is injective.
\end{lemma}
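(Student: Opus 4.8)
The plan is to prove the concrete statement behind the assertion: if $L\in Pic^0(X)$ satisfies $L|_D\cong\mathcal O_D$, then $L\cong\mathcal O_X$. (In the application $X=V_{i+1}$ is smooth, so I am free to use Kodaira vanishing on $X$.) First I would tensor the structure sequence of the divisor,
\[
0\longrightarrow \mathcal O_X(-D)\longrightarrow \mathcal O_X\longrightarrow \mathcal O_D\longrightarrow 0,
\]
successively with $L$, with $L^{-1}$, and with $\mathcal O_X$, and read off the induced long exact sequences in cohomology.

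The computational heart is the vanishing $H^1\bigl(X,\,M\otimes\mathcal O_X(-D)\bigr)=0$ for every numerically trivial line bundle $M$: by Serre duality this group is dual to $H^{n-1}\bigl(X,\,\omega_X\otimes M^{-1}\otimes\mathcal O_X(D)\bigr)$, and $M^{-1}\otimes\mathcal O_X(D)$ is numerically equivalent to the ample class $\mathcal O_X(D)$, hence ample (ampleness of a line bundle depends only on its numerical class, e.g.\ by Nakai--Moishezon); since $n-1\geq 1$ the group vanishes by Kodaira vanishing. Moreover $H^0\bigl(X,\,M\otimes\mathcal O_X(-D)\bigr)=0$, since a nonzero section would produce an effective divisor in a numerically anti-ample class, which is impossible. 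Applying this with $M=\mathcal O_X$ first shows $H^0(\mathcal O_X)\xrightarrow{\ \sim\ }H^0(\mathcal O_D)$, so $D$ is connected and $H^0(D,\mathcal O_D)=\mathbb C$. Applying it with $M=L$ and $M=L^{-1}$ then shows that the restriction maps $H^0(X,L)\to H^0(D,\mathcal O_D)$ and $H^0(X,L^{-1})\to H^0(D,\mathcal O_D)$ are both isomorphisms onto $\mathbb C$.

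Choose sections $s\in H^0(X,L)$ and $t\in H^0(X,L^{-1})$ with $s|_D=t|_D=1$. Their product $s\cdot t\in H^0(X,\mathcal O_X)=\mathbb C$ restricts to $1$ on $D$, hence $s\cdot t\equiv 1$; equivalently $\operatorname{div}(s)+\operatorname{div}(t)=0$ with both summands effective, forcing $\operatorname{div}(s)=0$. So $s$ is a nowhere-vanishing section of $L$, i.e.\ $L\cong\mathcal O_X$, which is precisely injectivity of $Pic^0(X)\to Pic^0(D)$.

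I do not anticipate a genuine obstacle: the argument reduces to Kodaira vanishing together with the fact that ampleness of a line bundle is numerical, and the only points needing a little care are the identification $H^0(D,\mathcal O_D)=\mathbb C$ (connectedness of the ample divisor $D$, which is where the hypothesis $n\ge 3$ — or even $n\ge 2$ — enters, via the vanishing of $H^1$ of $\mathcal O_X(-D)$) and the bookkeeping in the two long exact sequences. One could alternatively deduce the statement from the Lefschetz hyperplane theorem, noting that $H^1(X,\mathbb Z)\to H^1(D,\mathbb Z)$ is injective and that $H^1(X,\mathcal O_X)\to H^1(D,\mathcal O_D)$ is injective by the vanishing above, so that the induced map of Picard varieties, being compatible with the exponential sequences, is injective; but the direct cohomological argument has the advantage of not invoking a Lefschetz theorem for the possibly singular divisor $D$.
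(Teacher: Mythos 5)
Your proof is correct, but it takes a genuinely different route from the paper. The paper disposes of the lemma in one line by invoking the Grothendieck--Lefschetz theorem for Picard groups (as in Lazarsfeld), which for a smooth projective $X$ of dimension $\geq 3$ and an ample effective divisor $D$ gives injectivity of the full restriction map $Pic(X)\rightarrow Pic(D)$, hence in particular triviality of the kernel on $Pic^0$. You instead give a self-contained cohomological argument: Kodaira vanishing plus the numerical nature of ampleness yield $H^0=H^1=0$ for $M\otimes\mathcal O_X(-D)$ with $M$ numerically trivial, and the twisted structure sequences then produce nowhere-vanishing sections of $L$ and $L^{-1}$, forcing $L\cong\mathcal O_X$. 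What your approach buys: it avoids the black box, it actually proves the stronger statement that any \emph{numerically trivial} line bundle trivial on $D$ is trivial on $X$ (so it also handles torsion classes outside $Pic^0$), and, as you note, it already works for $n\geq 2$, whereas the cited Lefschetz theorem genuinely needs $n\geq 3$ to control all of $Pic$. What it costs: you need $X$ smooth for Kodaira vanishing and Serre duality --- but you flag this, the paper's citation carries the same implicit hypothesis, and in the application $X=V_{i+1}$ is smooth, so nothing is lost. The small points needing care (connectedness of $D$ giving $H^0(D,\mathcal O_D)=\mathbb C$, and the product $s\cdot t$ being a nonzero constant so that $\operatorname{div}(s)=\operatorname{div}(t)=0$) are all handled correctly.
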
 

\begin{proof}
By the Lefschetz Theorem for Picard Groups (\cite{Laz}), we have that the restriction morphism $Pic^0(X)\rightarrow Pic^0(D)$ has trivial kernel. 
\end{proof} 

We remind some results on generic vanishing theory. The main objects of interest are the cohomological support loci. 

\begin{defi}
Let $X$ be an irregular (smooth) variety of dimension $d$. The cohomological support loci of $\mathcal{O}_X$ are the algebraic sets
\[
V^i(X)=V^i(X, \mathcal{O}_X)=\{\eta\in Pic^0(X)\ |\ h^i (X, \mathcal{O}_X \otimes \eta)\neq 0\},
\]
where $i = 1,\dots, d$.
\end{defi}

The main result about the structure of the cohomological support loci was proved by Green and Lazarsfeld, with an important addition due to Simpson (the fact that the translations are given by torsion elements).
\begin{thm}\label{gv1}(\cite{GL1},\cite{GL2}, \cite{Simp})
Let $X$ be an irregular variety of dimension $d$, then $V^i(X)$ is formed by translates of subtorus of $Pic^0(X)$ by torsion elements. Moreover, 
\[
codim_{Pic^0(X)} V^i(X)\geq dim\,a (X)-i
\]
where $a(X)$ is the image of $X$ by its Albanese morphism.

In particular, $h^i (X,L) = 0$ for general $L\in Pic^0(X)$ and $i < dim\,a(X)$.
\end{thm}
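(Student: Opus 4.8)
The plan is to prove the two assertions separately: the codimension inequality, which is the Green--Lazarsfeld generic vanishing bound, and the statement that the components of $V^i(X)$ are torsion translates of subtori, which is the Green--Lazarsfeld--Simpson structure theorem. Throughout I identify the tangent space $T_0 Pic^0(X)$ with $H^1(X,\mathcal{O}_X)$, so that every $v\in H^1(X,\mathcal{O}_X)$ gives a first-order deformation of a line bundle and, via cup product, an endomorphism of the cohomology. Since cup product of odd degree classes is graded-commutative, $v\cup v=0$ in characteristic zero, so each $v$ yields a complex (the derivative complex)
\[
\cdots \to H^{i-1}(X,\mathcal{O}_X)\xrightarrow{\cup v} H^i(X,\mathcal{O}_X)\xrightarrow{\cup v} H^{i+1}(X,\mathcal{O}_X)\to\cdots.
\]
The whole argument revolves around when this complex is exact.

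First I would carry out the deformation-theoretic reduction. Fix a point $\eta\in V^i(X)$ lying on a component $Z$, and let $v$ be tangent to $Z$ at a general smooth point $\eta$. The standard deformation theory of the cohomology of line bundles shows that if a class in $H^i(X,\eta)$ persists to first order in the direction $v$, then its image under $\cup v$ in $H^{i+1}$ vanishes while it does not lie in the image of $\cup v$ from $H^{i-1}$; consequently the tangent cone to $V^i$ at $\eta$ is contained in the locus
\[
\Sigma^i=\{v\in H^1(X,\mathcal{O}_X)\ :\ \text{the derivative complex is not exact at the } H^i \text{ term}\}.
\]
Since the tangent cone has the same dimension as $V^i$ near $\eta$, this gives $\operatorname{codim}_{Pic^0(X)} V^i(X)\geq \operatorname{codim}_{H^1(X,\mathcal{O}_X)}\Sigma^i$, reducing the codimension bound to a purely Hodge-theoretic estimate on $\Sigma^i$.

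The heart of the argument is then the pointwise computation bounding $\operatorname{codim}\Sigma^i$. Using the Hodge decomposition I conjugate $v\in H^1(X,\mathcal{O}_X)=H^{0,1}(X)$ to a holomorphic $1$-form $w=\bar v\in H^0(X,\Omega^1_X)$, under which $\cup v$ becomes wedging by $w$. The derivative complex is then compared, through a hypercohomology spectral sequence, with the Koszul complex of sheaves $(\Omega^\bullet_X,\wedge w)$, whose cohomology sheaves are supported on the zero locus $Z(w)$ because a nonzero cotangent vector gives a pointwise exact Koszul complex. A general $1$-form pulled back along the Albanese map vanishes only in codimension $\dim a(X)$, and tracing this through the spectral sequence yields exactness of the derivative complex at $H^i$ for all $i<\dim a(X)$ away from a locus of codimension at least $\dim a(X)-i$; that is, $\operatorname{codim}\Sigma^i\geq \dim a(X)-i$. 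Combined with the previous step this gives the stated inequality. The final assertion is then immediate: for $i<\dim a(X)$ the bound forces $V^i(X)\subsetneq Pic^0(X)$, so a general $L\in Pic^0(X)$ avoids the finite union $\bigcup_{i<\dim a(X)}V^i(X)$ and satisfies $h^i(X,L)=0$.

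It remains to establish the structure of the components, and this is where the main difficulty lies. That each component is a translate of a subtorus of $Pic^0(X)$ follows from pushing the infinitesimal analysis above to all orders: the higher obstructions to deforming cohomology classes are again governed by iterated cup products, and the formality of the cohomology algebra of a compact K\"ahler manifold forces the analytic germ of $V^i$ to be cut out by its linear part, hence to be a subtorus translate. The subtle point is that the translation is by a \emph{torsion} element. For this I would pass from line bundles to rank-one local systems, realizing $V^i(X)$ inside the character variety $\operatorname{Hom}(\pi_1(X),\mathbb{C}^\ast)$ where the analogous jumping loci live, and invoke Simpson's theorem that these Betti jumping loci are defined over $\bar{\mathbb{Q}}$ and stable under the action induced by all automorphisms of $\mathbb{C}$; a subtorus translate with this Galois-stability and arithmetic rationality must pass through a torsion point. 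This arithmetic input, together with the non-abelian Hodge correspondence matching the Dolbeault loci $V^i(X,\mathcal{O}_X)$ to the Betti loci, is the genuinely hard ingredient and the part that cannot be reached by the Hodge-theoretic deformation argument alone.
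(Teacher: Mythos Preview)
The paper does not prove this theorem at all: it is stated as a known background result, with attribution to Green--Lazarsfeld and Simpson via the citations \cite{GL1}, \cite{GL2}, \cite{Simp}, and is then invoked as a black box in the proof of Theorem~\ref{charCn}. There is therefore no ``paper's own proof'' to compare against.

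Your proposal is a reasonable high-level sketch of the original arguments in those references: the derivative complex and the Hodge-theoretic bound on its non-exactness locus for the codimension estimate (Green--Lazarsfeld), the linearity/formality argument for the subtorus structure, and the passage to the Betti moduli space and Galois-rigidity for the torsion translation (Simpson). A couple of points deserve tightening if you were to write this out in full. First, the tangent-cone inclusion you state is for the locus at a general point $\eta$ of a component, but the derivative complex you write down is the one at the origin, with untwisted coefficients $H^i(X,\mathcal{O}_X)$; at $\eta$ one must use the twisted groups $H^i(X,\eta)$ and the corresponding cup-product complex. Second, the step ``formality forces the germ to be cut out by its linear part'' is really the Green--Lazarsfeld linearity theorem and needs the full obstruction calculus (or Arapura's/Simpson's mixed-Hodge or $\ell$-adic arguments), not just formality of the de Rham algebra in the naive sense. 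These are refinements rather than gaps, and your outline matches the literature that the paper is citing.
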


\vspace{5pt}
We define the index of a family of divisors:

\begin{defi}
Given an irreducible family $\mathcal{D}\subset B \times X$, with dimension $1$ ($dimB=1$), of effective divisors in a projective variety $X$, the \textbf{index} $i=i(\mathcal{D})$ of $\mathcal{D}$ is the degree of the projection, $p_2: \mathcal{D} \rightarrow X$. Equivalently, it is the number of divisors of $\mathcal{D}$ containing the general point of $X$.
\end{defi}

Notice that the family of coordinate divisors in $C^{(n)}$ has index $n$.
\vspace{5pt}

Now, we have all the necessary tools to prove our main theorem.

\begin{proof}[Proof of Theorem \ref{charCn}]
We prove the theorem by induction. First, we observe that since $C\subset V_2$ is an irreducible smooth curve it is $2$-connected. Moreover, its self-intersection is one and hence, following the proof of Theorem \ref{mpp}, we deduce that $S:=V_2$ is birational to $C^{(2)}$. Furthermore, since the divisor $C$ is ample in $S$, in fact $S\cong C^{(2)}$, because any exceptional divisor would have intersection product $0$ with $C$. Hence, the case $n=2$ is already known. By the proof of Theorem \ref{mpp} (see \cite{MPP1}, Proposition 4.3) we have that there exists a $1$-dimensional family in $Pic^0(S)$
\[
\mathcal{W}:=\{\tilde{\eta} \in Pic^0(S)\ |\ h^0(S, \mathcal{O}_S(C)\otimes\tilde{\eta})=1\}.
\]  
It is the image of   
\[
W_1(C)=\{\eta\in Pic^0(C)\ |\ h^0(C,\mathcal{O}_C(C)\otimes \eta)=1\}
\] 
by the isomorphism $Pic^0(S)\cong Pic^0(C)$ given by the restriction map. That is, we consider $W_1(C)$ as the image of $C$ by the natural map $C\rightarrow Pic^0(C)$ defined as $p\rightarrow \mathcal{O}_C(p-C|_C)$.

Furthermore, $\mathcal{C}=\{C_{\eta},\ \eta\in \mathcal{W}\}$ is the family of coordinate curves in $C^{(2)}$, where $C_{\eta}$ is the curve such that $\mathcal{O}_S(C_{\eta})=\mathcal{O}_S(C)\otimes \eta$.

We observe that since $V_1=C$ is algebraically equivalent to a coordinate curve in $V_2\cong C^{(2)}$, it is, in fact, a coordinate curve and thus $0\in \mathcal{W}$. 

We assume now that $n\geq 3$ and that the result is proven for all $dim(X)\leq n-1$. We are going to prove the theorem for $dim(X)=n$. We consider $S:=V_2$ for the inductive process.

Since $V_i$ is ample in $V_{i+1}$ and $q(X)=g(C)$, by Lemma \ref{injrest} we obtain the following chain of isomorphisms given by the restriction maps:
\[
Pic^0(X)\cong Pic^0(V_{n-1})\cong \dots \cong Pic^0(V_i)\cong \cdots \cong Pic^0(S)\cong Pic^0(C).
\]

We add the following statement to the inductive process:

\begin{itemize}
\item  The image of $\mathcal{W}$ in $Pic^0(V_i)$ by this chain of isomorphisms parametrizes the family of coordinate divisors in $V_{i}\cong C^{(i)}$ for $i<n$. 
\end{itemize}

We remind that by the induction hypothesis, $V_{i-1}$ is a coordinate divisor in $V_i\cong C^{(i)}$ for all $i<n$. 

In what follows we denote by $N$ the divisor associated to the line bundle $\mathcal{O}_{V_{n-1}}(V_{n-1}|_{V_{n-1}})$.

\textbf{Claim:} \emph{There exists $\alpha \in Pic^0(X)$ such that 
\[
\alpha|_{V_{n-1}}=\mathcal{O}_{V_{n-1}}(V_{n-2}-N)
\]
and $V_{n-1}^n=1$.}

Consider $\mathcal{O}_S(V_{n-2}|_S-N|_S)$. We observe that $V_{n-1}\cong C^{(n-1)}$ and $S\cong C^{(2)}$ with the inclusion $S\hookrightarrow V_{n-2}$ given by a point in $C^{(n-4)}$ (when $n=3$, $V_{n-2}$ is just $C$). Therefore, $V_{n-2}|_S$ is algebraically a coordinate curve $C_{2,Q}$ in $S\cong C^{(2)}$.

Moreover, $N|_S\cdot C=V_{n-1}|_S \cdot C=1$ and hence
\[
(V_{n-2}|_S-N|_S)\cdot C=(C_{2,Q}-V_{n-1}|_S)\cdot C=0
\]
and 
\[
\begin{array}{c}
(V_{n-2}|_S-N|_S)^2=(C_{2,Q}-V_{n-1}|_S)^2=\\
C_{2,Q}^2-2C_{2,Q}\cdot V_{n-1}|_S + (V_{n-1}|_S)^2 =-1+(V_{n-1}|_S)^2\geq 0
\end{array}
\]
because $V_{n-1}$ is ample in $V_n$.

Since $C$ is ample in $S$, by the Hodge index Theorem, we deduce that $V_{n-2}|_S-N|_S$ is numerically trivial. In fact, it is algebraically trivial, because there is no torsion in $H^2(C^{(2)}, \mathbb{Z})$ (see \cite{MD}). 

By the Lefschetz Theorem for Picard Groups applied to the chain of $V_i$'s we have that the restriction map gives an injective morphism $Pic(V_{n-1}) \hookrightarrow Pic(S)$. Then, from the isomorphism $Pic^0(V_{n-1})\cong Pic^0(S)$ and 
\[
\mathcal{O}_S(V_{n-2}|_S-N|_S)\in Pic^0(S)
\] 
we deduce that 
\[
\mathcal{O}_{V_{n-1}}(V_{n-2}-N)\in Pic^0(V_{n-1}).
\]
Consequently, by the isomorphism between the $Pic^0$'s, there exists an $\alpha$ as claimed. 

Finally, since $V_{n-2}$ and $N$ are numerically equivalent, we obtain that $1=V_{n-2}^{n-1}=N^{n-1}=(V_{n-1}|_{V_{n-1}})^{n-1}=V_{n-1}^n$. $\Diamond$ 

Now, consider the exact sequence
\begin{equation}\label{exactsequence}
0\rightarrow \mathcal{O}_{X} \rightarrow \mathcal{O}_{X}({V_{n-1}}) \rightarrow \mathcal{O}_{V_{n-1}}({V_{n-1}}) \rightarrow 0.
\end{equation}

Let $\mathcal{W}_n$ be the image of $\mathcal{W}$ by the isomorphism $Pic^0(V_{n-1})\cong Pic^0(S)$ and $\eta \in \mathcal{W}_n$ a general element. We tensor \eqref{exactsequence} with $\alpha\otimes \eta$ and get
\[
0 \rightarrow  \alpha\otimes \eta \rightarrow \alpha\otimes \eta \otimes \mathcal{O}_{X}({V_{n-1}}) \rightarrow \mathcal{O}_{V_{n-1}}(V_{n-2})\otimes\eta|_{V_{n-1}} \rightarrow 0.
\]
We take cohomology and obtain 
\[
\begin{array}{ll}
0 \rightarrow  & H^0({X},\alpha\otimes \eta) \rightarrow H^0({X},\alpha\otimes \eta \otimes \mathcal{O}_{X}({V_{n-1}})) \rightarrow \\[4pt]
&H^0({V_{n-1}}, \mathcal{O}_{V_{n-1}}(V_{n-2})\otimes\eta|_{V_{n-1}}) \rightarrow H^1({X}, \alpha\otimes \eta) \rightarrow \dots
\end{array}
\]

First of all, we observe that $H^0({X},\alpha\otimes \eta)=0$ since $\alpha\otimes \eta \in Pic^0({X})$ is non trivial. 

Second, we notice that the image of the Albanese morphism of $X$ has dimension greater or equal than two. Indeed, we know that the image of the Albanese morphism of $S\cong C^{(2)}$ is a two dimensional subvarity of $Alb(S)=J(C)$. By the identification of $Pic^{0}$'s, this subvariety of $J(C)$ lives inside the image of the Albanese morphism of $X$, hence, it is of dimension at least two. We can apply generic vanishing results and deduce that $V^1(X)=\{\varsigma\in Pic^0(X)\ |\ h^1(X, \varsigma)>0\}$ is the union of finitely many translates of proper abelian subvarieties.

Furthermore, we know that $W_1(C)$ generates $Pic^0(C)$, hence, its image by the identification $Pic^0(X)\cong Pic^0(C)$ generates $Pic^0(X)$. When we translate it by a fixed element $\alpha\in Pic^0(X)$ it still generates, so by the generic vanishing results, it cannot be contained in $V^1(X)$. Hence, for a general $\eta\in \mathcal{W}_n$ we obtain that $\alpha \otimes \eta \notin V^1(X)$ and thus $H^1(X, \alpha \otimes \eta)=0$. 

Therefore, for $\eta\in \mathcal{W}_n$ general we have that  
\[
h^0(X,\alpha\otimes \eta \otimes \mathcal{O}_X(V_{n-1}))=h^0(V_{n-1}, \mathcal{O}_{V_{n-1}}(V_{n-2})\otimes\eta|_{V_{n-1}})=1>0.
\] 
And by semicontinuity, $h^0(X, \alpha \otimes \eta \otimes \mathcal{O}_{X}({V_{n-1}}))>0$ for all $\eta\in \mathcal{W}_n$.

Thus, we have a $1$-dimensional family, $\mathcal{D}$ in ${X}$, of effective divisors algebraically equivalent to ${V_{n-1}}$. Let $H_{\eta}$ denote the effective divisor in $X$ such that $\mathcal{O}_X(H_{\eta})=\mathcal{O}_X(V_{n-1})\otimes \alpha \otimes \eta$. We observe that: 
\begin{itemize}
\item $V_{n-1}\cdot H_{\eta} =(V_{n-2})_{\eta}$. Indeed, 
\[
\mathcal{O}_{V_{n-1}}(H_{\eta})=\mathcal{O}_{V_{n-1}}(N)\otimes \alpha|_{V_{n-1}}\otimes \eta=\mathcal{O}_{V_{n-1}}({V_{n-2}})\otimes \eta
\] 
where we consider $\eta \in Pic^0(V_{n})$ or $Pic^0(V_{n-1})$ indistinctively by the isomorphism given by the restriction map.

\item Since $H_{\eta}$ is algebraically equivalent to $V_{n-2}$, we have that $H_{\eta}$ is ample and $H_{\eta}^n=1$. Hence, $Pic^0(X) \cong Pic^0(H_{\eta})\cong Pic^0(C)$. In particular, when $H_{\eta}$ is smooth, $q(H_{\eta})=g(C)$. 

\item If $H_{\eta}$ is smooth, since $V_{n-1}\cdot H_{\eta} =(V_{n-2})_{\eta}\cong C^{(n-2)}$, we can apply the induction hypothesis to $H_{\eta}$ and deduce that $H_{\eta} \cong C^{(n-1)}$. In addition we obtain that in $Pic^0(H_{\eta})$ there is a $1$-dimensional family $\{\varsigma \in Pic^0(H_{\eta})\ |\ h^0(H_{\eta}, \mathcal{O}_{H_{\eta}}((V_{n-2})_{\eta})\otimes \varsigma)>0\}$ which is the image of $\mathcal{W}$ via the identification $Pic^0(H_{\eta})\cong Pic^0(C)$. 
\end{itemize} 

Assume for a moment that $H_{\eta}$ is smooth for $\eta$ generic.

Since $C^{(i)}$ deforms in an algebraic family only as the $i$th symmetric product of a curve, we deduce that the general element in $\mathcal{D}$ is birational to $C^{(n-1)}$. Moreover, since $V_{n-1}\cdot H_{\eta} = (C^{n-2})_{\eta}$ we obtain that the restriction of $\mathcal{D}$ to a general divisor in the family is the family of coordinate divisors in $H_{\eta}\approx C^{(n-1)}$. 

Consequently, since the index of the family of coordinate divisors on $C^{(n-1)}$ is $n-1$, we deduce that the index of $\mathcal{D}$ in $X$ is $n$. Indeed, given a general point in $H_{\eta}$, we have $n-1$ other elements of $\mathcal{D}$ containing it, that together with $H_{\eta}$ are a total of $n$ elements of the family.

Next, we see that indeed $X\cong C^{(n)}$.  

Let $Q\in X$ be a general point and let $H_1, \dots, H_n$ be the divisors in $\mathcal{D}$ containing $Q$. Let $D_1=V_{n-1}\cdot H_1$, then $D_1$ is a coordinate divisor in $V_{n-1}\cong C^{(n-1)}$, hence, it is of the form $C^{(n-2)}+{P_1}$, for certain $P_1\in C$. In a similar way, $H_i\cdot V_{n-1}=C^{(n-2)}+{P_i}$. Thus, we have a birational map
\[
\begin{matrix}
X & \dashrightarrow & C^{(n)} \\
Q & \dashrightarrow & P_1+\dots+ P_n.
\end{matrix} 
\]

In fact, $X\cong C^{(n)}$. Indeed, any curve contracted by the birational map would have product $0$ with $V_{n-1}$, which is not possible since $V_{n-1}$ is ample in $X$. Observe finally that if $V_{n-1}\cdot H_{\eta}= C^{(n-2)}+{P}$, then $H_{\eta}=C_{n,P}$, the coordinate divisor with base point $P$, and hence, $\mathcal{W}_n$ parametrizes the coordinate divisors in $C^{(n)}$.

Finally, we study the possible singularities of the hypersurfaces $H_{\eta}$ to prove that indeed the general one is smooth. 

First, a divisor $H_{\eta}$ does not contain a curve of singularities. Otherwise, since $V_{n-1}$ is ample, this curve would cut $V_{n-1}$ in a point, and then $(C^{(n-2)})_{\eta}$ should be singular, contradicting our hypothesis. Hence, each $H_{\eta}$ has at most a finite number of singularities.

Second, the possible singularities do not deform with the divisors in the family. Otherwise, there would be some curves $\{B_i\}$ such that the intersection point of $B_i$ and $H_{\eta}$ would be a singular point of $H_{\eta}$. Since $V_{n-1}$ is ample, a curve $B_i$ would intersect $V_{n-1}$ in a point $P\in (C^{(n-2)})_{\eta}$ for a certain $\eta$, and then $(C^{(n-2)})_{\eta}$ should be singular.

Third, there is no base curve for the family $\mathcal{D}$. Otherwise, this curve would intersect $V_{n-1}$ and then the family $\mathcal{D}_{n-2}$ of coordinate divisors in $C^{(n-1)}$ should have a base point.

Finally, there is no singularity $Q$ common to all $H_{\eta}\in \mathcal{D}$. Otherwise, the point $Q$ would be a base point of the family and all varieties would have multiplicity at least two at this point, therefore, $V_{n-1}^n\geq 2$, contradicting $V_{n-1}^n=1$.

Therefore, not all elements $H_{\eta}\in \mathcal{D}$ are singular, in fact, the general one is smooth, and those singular have at most isolated singularities.
\end{proof}

From the theorem we deduce the following result, less general but simpler in its hypothesis.

\begin{cor}
Let $X$ be a smooth projective variety of dimension $n$. Assume that there exists a divisor $D$ isomorphic to $C^{(n-1)}$ such that, if $C$ denotes a coordinate curve in $D\cong C^{(n-1)}$, then $D\cdot C=1$. Assume also that $q(X)=g(C)$. Then $X\cong C^{(n)}$.
\end{cor}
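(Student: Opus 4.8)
The plan is to deduce the corollary from Theorem~\ref{charCn} by producing, inside the given divisor $D\cong C^{(n-1)}$, the lower part of a chain of subvarieties meeting hypotheses (1)--(5). I would set $V_n=X$ and $V_{n-1}=D$, and then choose nested coordinate subvarieties $V_{n-1}\supset V_{n-2}\supset\cdots\supset V_1$ of $D\cong C^{(n-1)}$ with $V_1=C$ the prescribed coordinate curve. Such a flag exists: writing $C=\{R+P_1+\cdots+P_{n-2}\ :\ R\in C\}$ inside $C^{(n-1)}$, the coordinate divisor $C_{n-1,P_{n-2}}\cong C^{(n-2)}$ contains $C$, and under the identification $C^{(n-2)}\cong C_{n-1,P_{n-2}}$ the curve $C$ is again a coordinate curve; iterating, peeling off one point of $P_1+\cdots+P_{n-2}$ at each stage, yields the whole flag, with each $V_i$ $(i\le n-1)$ isomorphic to $C^{(i)}$ and each $V_{i-1}$ a coordinate divisor of $V_i$.

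It then remains to check the five hypotheses of Theorem~\ref{charCn}. Hypothesis (1) is immediate: the $V_i$ with $i\le n-1$ are smooth irreducible symmetric products, and $V_n=X$ is smooth of dimension $n$ by assumption. Hypotheses (2) and (3) at the levels $i\le n-2$ hold by construction, since a coordinate divisor is ample in $C^{(i+1)}$ (\cite[Pag.~247]{Poli}) and its intersection number with a coordinate curve is $1$ by an elementary computation on the symmetric product. At the top level $i=n-1$, hypothesis (3) is exactly the assumption $D\cdot C=1$, and hypothesis (5) is the assumption $q(X)=g(C)$. Hypothesis (4) asks that the Albanese image of $V_2\cong C^{(2)}$ be a surface, which is the classical fact that the Abel--Jacobi map $C^{(2)}\to J(C)$ has two-dimensional image when $g(C)\ge 2$; the cases $g(C)\le 1$ make (4) fail and, if intended, must be handled directly (for $g(C)=0$ one has $D\cong\mathbb{P}^{n-1}$ with $\mathcal{O}_D(D)=\mathcal{O}(1)$, whence $X\cong\mathbb{P}^n=(\mathbb{P}^1)^{(n)}$). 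Granting hypothesis (2) at the top level, Theorem~\ref{charCn} then yields $X\cong C^{(n)}$.

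The step that genuinely needs attention, and which I expect to be the main obstacle, is precisely that top-level instance of hypothesis (2): that $D$ is ample in $X$. This is not literally among the stated hypotheses, so either it should be added---it is the natural counterpart, for $V_{n-1}$, of condition (2) of Theorem~\ref{charCn}---or it has to be extracted from $D\cdot C=1$. One gets for free that $\mathcal{O}_D(D)$ has degree $1$ on every coordinate curve of $D\cong C^{(n-1)}$, hence is algebraically equivalent to a coordinate divisor and in particular ample; so the normal bundle $N_{D/X}$ is ample. Passing from the ampleness of $N_{D/X}$ to the ampleness of $D$ in $X$ is the delicate point: it requires a Hartshorne--Fujita type argument for divisors with ample normal bundle---for instance, using the exact sequences $0\to\mathcal{O}_X((m-1)D)\to\mathcal{O}_X(mD)\to N_{D/X}^{\otimes m}\to 0$ to show that $|mD|$ is base-point free near $D$ and eventually globally generated---or else one simply includes the ampleness of $D$ among the hypotheses, in line with Theorem~\ref{charCn}. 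Everything else in the corollary is then a formal consequence of that theorem.
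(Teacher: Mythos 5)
The paper offers no proof of this corollary beyond the remark that it follows from Theorem~\ref{charCn}, and your reduction --- building the flag $V_{n-1}\supset\cdots\supset V_1$ out of nested coordinate subvarieties of $D\cong C^{(n-1)}$ and feeding it into the theorem --- is clearly the intended argument. The flag construction and the verification of hypotheses (1), (3) for $i\le n-2$, and (5) are correct, and you are also right that (4) silently requires $g(C)\ge 2$, so the low-genus cases would need separate (easy) treatment.

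The substantive issue is exactly the one you isolate: hypothesis (2) at the top level, the ampleness of $D$ in $X$, which the corollary does not assume and which the proof of Theorem~\ref{charCn} uses essentially (for Lemma~\ref{injrest}, to rule out curves contracted by the birational map, and throughout the analysis of the singular members of the family $\mathcal{D}$). Be aware, though, that your sketched route to it has a hole of its own: knowing that $\mathcal{O}_X(D)$ has degree $1$ on a coordinate curve does not force $\mathcal{O}_D(D)$ to be algebraically equivalent to a coordinate divisor, because for $g\ge 1$ the N\'eron--Severi group of $C^{(n-1)}$ has rank at least $2$ (generated, for generic $C$, by the coordinate class $x$ and the pullback $\theta$ of the theta divisor), and the degree on a coordinate curve of a class $ax+b\theta$ is $a+bg$, which does not determine $(a,b)$. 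So even the ampleness of $N_{D/X}$ is not ``for free''; and, as you note, ampleness of the normal bundle would in any case not by itself yield ampleness of $D$ in $X$. The honest conclusion, which you reach, is that the ampleness of $D$ must either be added to the hypotheses or be derived by an argument that the paper does not supply; with that amendment your proof is complete and coincides with the intended one.
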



\bibliographystyle{alpha}
\bibliography{bib}

\begin{thebibliography}{CCM98}

\bibitem[CCM98]{CCM}
F.~Catanese, C.~Ciliberto, and M.~{Mendes Lopes}.
\newblock On the classification of irregular surfaces of general type with non
  birational bicanonical map.
\newblock {\em Transactions of the AMS}, 350(1):275 -- 308, 1998.

\bibitem[CS93]{CS}
C.~Ciliberto and E.~Sernesi.
\newblock On the symmetric products of a curve.
\newblock {\em Archiv der Mathematik}, 61(3):285--290, 1993.

\bibitem[GL87]{GL1}
M.~Green and R.~Lazarsfeld.
\newblock Deformation theory, generic vanishing theorems, and some conjectures
  of enriques, catanese and beauville.
\newblock {\em Inventiones mathematicae}, 90(2):389--407, 1987.

\bibitem[GL91]{GL2}
M.~Green and R.~Lazarsfeld.
\newblock Higher obstructions to deforming cohomology groups of line bundles.
\newblock {\em J. Amer. Math. Soc}, 4(1), 1991.

\bibitem[Laz04]{Laz}
R.~Lazarsfeld.
\newblock {\em Positivity in algebraic geometry}.
\newblock Springer, 2004.

\bibitem[Mac62]{MD}
I.G. MacDonald.
\newblock Symmetric products of an algebraic curve.
\newblock {\em Topology}, 1(4):319 -- 343, 1962.

\bibitem[MPP11]{MPP1}
M.~{Mendes Lopes}, R.~Pardini, and {G. P.} Pirola.
\newblock A characterization of the symmetric square of a curve.
\newblock {\em International Mathematics Research Notices}, 2011.

\bibitem[Pol03]{Poli}
A.~Polishchuk.
\newblock {\em Abelian Varieties, Theta Functions and the Fourier Transform}.
\newblock Cambridge University Press, 2003.

\bibitem[Sim93]{Simp}
C.~Simpson.
\newblock Subspaces of moduli spaces of rank one local systems.
\newblock {\em Ann. Sci. \'Ecole Norm. Sup. (4)}, 26(3):361--401, 1993.

\end{thebibliography}

\end{document}